\documentclass{article}
\usepackage[utf8]{inputenc}
\usepackage[english]{babel}

\title{The Structure of Biquandle Brackets}
\author{Will Hoffer \\ \texttt{whoff003@ucr.edu} \and Adu Vengal \\ \texttt{vengal.8@osu.edu} \and Vilas Winstein \\ \texttt{winstein.1@osu.edu}}
\date{August 2019}

\usepackage[numbers]{natbib}
\usepackage{graphicx}
\usepackage{amssymb,amsmath,dsfont}
\usepackage{amsthm}
\usepackage{comment}

\usepackage{hyperref}
\hypersetup{
    colorlinks=true,
    linkcolor=blue,
    filecolor=blue,      
    urlcolor=blue,
    citecolor=blue,
}

% Declarations
\DeclareMathOperator{\ovtri}{\overline{\rhd}}
\DeclareMathOperator{\untri}{\underline{\rhd}}

\newtheorem{theorem}{Theorem}

\newtheorem{corollary}{Corollary}
\newtheorem{prop}{Proposition}

\theoremstyle{definition}
\newtheorem{definition}{Definition}
\newtheorem{example}{Example}
\newtheorem{remark}{Remark}

\renewcommand\qedsymbol{$\blacksquare$}

\begin{document}

\maketitle

\begin{abstract}
    In their paper entitled ``Quantum Enhancements and Biquandle Brackets,'' Nelson, Orrison, and Rivera introduced \emph{biquandle brackets}, which are customized skein invariants for biquandle-colored links.
    We prove herein that if a biquandle bracket $(A,B)$ is the pointwise product of the pair of functions $(A',B')$ with a function $\phi$, then $(A',B')$ is also a biquandle bracket if and only if $\phi$ is a a biquandle 2-cocycle (up to a constant multiple).
    As an application, we show that a new invariant introduced by Yang factors in this way, which allows us to show that the new invariant is in fact equivalent to the Jones polynomial on knots.
    Additionally, we provide a few new results about the structure of biquandle brackets and their relationship with biquandle 2-cocycles.
\end{abstract}

\section{Introduction}

\emph{Biquandles} are a type of algebraic structure whose axioms parallel the Reidemeister moves in knot theory.
Because of this, biquandles are the basis for many invariants of knots and links.
In particular, the \emph{biquandle counting invariant} is simply the number of ways to color a link diagram with elements of a biquandle so that relationships between colors at crossings are satisfied.
In \cite{nelson2017quantum}, an enhancement of the biquandle counting invariant was introduced, called the \emph{biquandle bracket}.
This is a type of skein relation depending on biquandle colorings.

A \emph{biquandle 2-cocycle} is another type of function on a biquandle that can be used to define a link invariant, arising from cohomology theory.
We found that if a biquandle bracket is the pointwise product of two functions, then one is a biquandle bracket if and only if the other is a 2-cocycle.
This result can be applied to a biquandle bracket proposed by Yang in \cite{yang2017enhanced}.
The biquandle bracket in question is such a pointwise product, wherein the biquandle bracket factor gives an invariant equivalent to the Jones polynomial.
The 2-cocycle factor gives a trivial invariant on knots, so this shows that Yang's invariant is altogether equivalent to the Jones polynomial for knots.

In some cases, a biquandle bracket is actually just a 2-cocycle in disguise.
We identify a sufficient condition for a biquandle to produce only this type of biquandle bracket, so that for this type of biquandle, the study of its brackets reduces to the study of its 2-cocycles (which are simpler in general).

This paper is structured as follows.
In section 2, we review definitions which we will require for our results, including the definition of biquandle brackets and biquandle 2-cocycles.
In section 3, we present a few results relating the structure of biquandle brackets to the structure of biquandle 2-cocycles.
In section 4, we present another result about the structure of the biquandle bracket, and provide some commentary on the result.
Finally, in section 5, we pose some questions for further research.

This work has been done as a part of the Summer 2018 undergraduate research program
\href{http://www.math.ohio-state.edu/~chmutov/wor-gr-su18/wor-gr.htm}{``\underline{Knots and Graphs}''}
at the Ohio State University.
We are grateful to the OSU Honors Program Research Fund and to the  NSF-DMS \#1547357 RTG grant: Algebraic Topology and Its Applications for financial support.
In addition, we are grateful to our advisor, Sergei Chmutov, for his help.

\section{Definitions and Notation}

To establish our notation and introduce the topics, we provide the following definitions.
We follow the notation and conventions in \cite{nelson2017quantum}.

\begin{definition}
    A \emph{biquandle} is a set $X$ with two binary operations $\untri, \ovtri$ such that $\forall  x, y, z \in X$,
    
    \begin{enumerate}
        \item[(i)]
            $x \untri x = x \ovtri x$
        
        \item[(ii)]
            The maps $\alpha_y(x) = x \ovtri y,\, \beta_y(x) = x \untri y$, and $S(x,y) = (y \ovtri x, x \untri y)$ are invertible. 
        
        \item[(iii)]
            The following exchange laws are satisfied:
            \begin{align*}
                (x\untri y) \untri (z \untri y) &= (x\untri z) \untri (y \ovtri z) \\
                (x\untri y) \ovtri (z \untri y) &= (x \ovtri z) \untri (y \ovtri z) \\
                (x\ovtri y) \ovtri (z \ovtri y) &= (x\ovtri z) \ovtri (y \untri z).
            \end{align*}
    \end{enumerate}
    
    If $x \ovtri y = x$ for all $x, y \in X$, then $X$ is called a \emph{quandle}.
    When there is no danger of confusion, we will write the biquandle $(X,\ovtri,\untri)$ simply as $X$.
\end{definition}

    If $X$ is a finite biquandle, we can represent all of the information about it in two operation tables.
    Fix some ordering on the elements of $X$ and label them with the integers $1$ through $n$ (where $n$ is the size of $X$).
    Then the operation table for $\untri$ is an $n \times n$ matrix of integers in $\{ 1, \dotsc, n \}$, and the $(i,j)$ entry of this matrix is $i \untri j$.
    The operation table for $\ovtri$ is defined similarly.
    For example, the following operation tables represent a biquandle on three elements.
    \begin{align*}
        \untri: \quad
        \begin{bmatrix}
            2 & 1 & 2 \\
            1 & 3 & 3 \\
            3 & 2 & 1
        \end{bmatrix}
        \qquad \qquad
        \ovtri: \quad
        \begin{bmatrix}
            3 & 3 & 3 \\
            2 & 2 & 2 \\
            1 & 1 & 1
        \end{bmatrix}
    \end{align*}

    The conditions in the biquandle definition are analogous to the Reidemeister moves in knot theory when we interpret $x \untri y$ as ``$x$ passing under $y$'' and $x \ovtri y$ as ``$x$ passing over $y$'' in the following way:
    
    \begin{center}
        \includegraphics[scale=0.15]{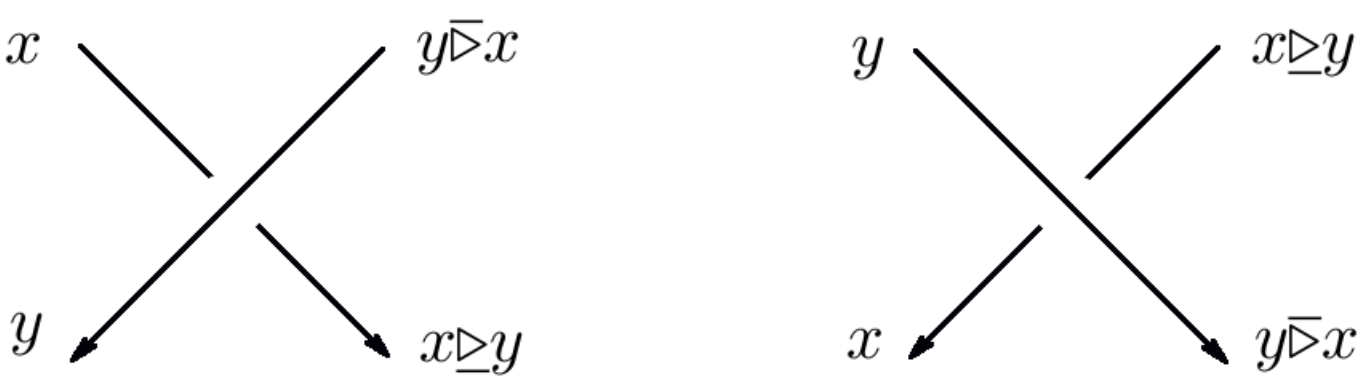}
    \end{center}
    
    Fix a biquandle $X$.
    An $X$-coloring of an oriented knot (or link) diagram $D$ is an assignment of an element of $X$ to each strand in the diagram such that the above relationships hold at each crossing.
    Then the biquandle axioms are precisely what is required for the $X$-coloring to be preserved as Reidemeister moves are performed on the diagram.
    For this reason, the number of $X$-colorings of a diagram is a link invariant, called the \emph{biquandle counting invariant}.
    
    In \cite{nelson2017quantum}, an enhancement of the biquandle counting invariant is introduced.
    For each $X$-coloring of $D$, one can perform a smoothing operation similar to the construction in the Kauffman bracket, but this time keeping track of the colorings at each crossing as follows:
    
    \begin{center}
        \includegraphics[scale=0.22]{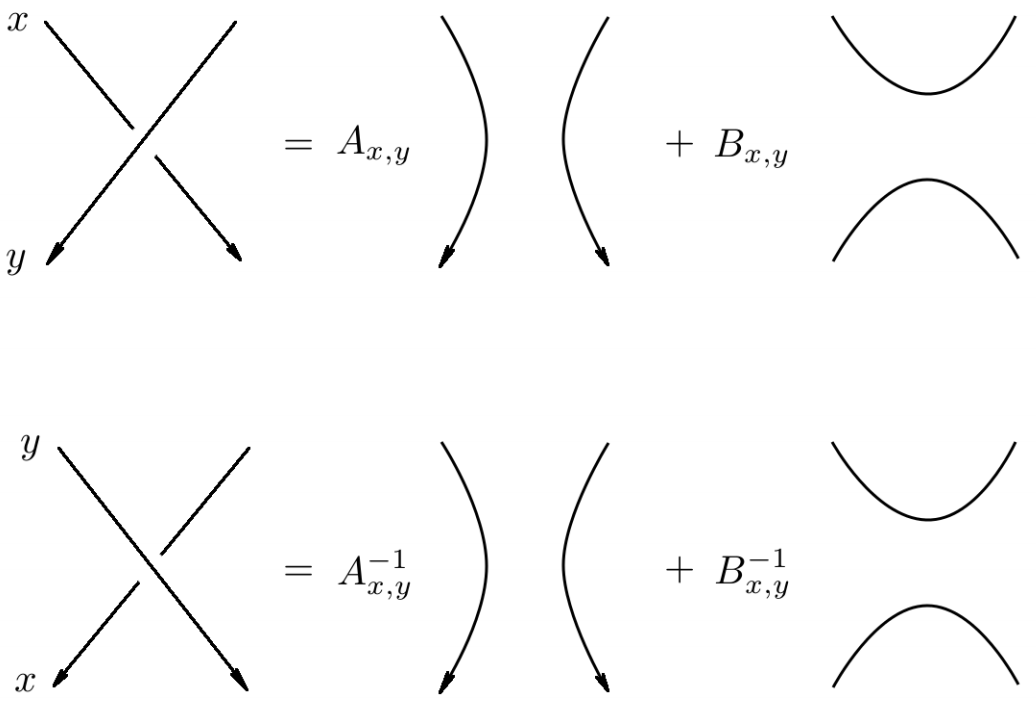}
    \end{center}
    
    Where for each $x,y \in X$, $A_{x,y}$ and $B_{x,y}$ are invertible elements of some commutative ring with unity $R$.
    Additionally, the removal of a circle with no crossings should correspond to multiplication by some element $\delta \in R$, and to correct for the additional states generated by kinks (from the first Reidemeister move), a writhe factor should be included, which can simply be an appropriate power of some element $w \in R^\times$.
    For the bracket to be an invariant of an $X$-colored link, it should not change when Reidemeister moves are applied and the $X$-coloring is updated correspondingly.
    Below are the conditions that must be satisfied by $A,B,\delta$, and $w$ for this to be true.
    For more details, see \cite{nelson2017quantum}.
 
\begin{definition}
    A \emph{biquandle bracket} on a biquandle $X$ with values in commutative ring (with unity) $R$ is a pair of maps $A,B:X\times X \rightarrow R^\times$ and two distinguished elements $\delta\in R, w\in R^\times$ which satisfy the following conditions.
    
    \begin{enumerate}
        \item[(i)]
            For all $x \in X$,
            $\delta A_{x,x}+B_{x,x}=w$ and $\delta A_{x,x}^{-1}+B_{x,x}^{-1}=w^{-1}$.
            
        \item[(ii)]
            For all $x, y \in X$,
            $\delta = -A_{x,y}B_{x,y}^{-1}-A_{x,y}^{-1}B_{x,y}$.
            
        \item[(iii)]
            For all $x, y, z \in X$, all of the following equations hold.
            \begin{align*}
                A_{x,y} A_{y,z} A_{x \untri y, z \ovtri y} &= A_{x,z} A_{y \ovtri x, z \ovtri x} A_{x \untri z, y \untri z}, \\
                A_{x,y} B_{y,z} B_{x \untri y, z \ovtri y} &= B_{x,z} B_{y \ovtri x, z \ovtri x} A_{x \untri z, y \untri z}, \\
                B_{x,y} A_{y,z} B_{x \untri y, z \ovtri y} &= B_{x,z} A_{y \ovtri x, z \ovtri x} B_{x \untri z, y \untri z}, \\
                A_{x,y} A_{y,z} B_{x \untri x, z \ovtri y} &= A_{x,z} B_{y \ovtri x, z \ovtri z} A_{x \untri z, y \untri z} \\
                &\qquad + A_{x, z} A_{y \ovtri x, z \ovtri x} B_{x \untri z, y \untri z} \\
                &\qquad + \delta A_{x, z} B_{y \ovtri x, z \ovtri x} B_{x \untri z, y \untri z} \\
                &\qquad + B_{x, z} B_{y \ovtri x, z \ovtri x} B_{x \untri z, y \untri z}, \\
                B_{x, z} A_{y \ovtri x, z \ovtri x} A_{x \untri z, y \untri z} &= B_{x, y} A_{y, z} A_{x \untri y, z \ovtri y} \\
                &\qquad + A_{x, y} B_{y, z} A_{x \untri y, z \ovtri y} \\
                &\qquad + \delta B_{x, y} B_{y, z} A_{x \untri y, z \ovtri y} \\
                &\qquad + B_{x, y} B_{y, z} B_{x \untri y, z \ovtri y}.
            \end{align*}
    \end{enumerate}
    
    Note that we denote $A(x,y)$ and $B(x,y)$ by $A_{x,y}$ and $B_{x,y}$.
    Additionally, since $\delta$ and $w$ are determined by the maps $A$ and $B$, we will generally denote a biquandle bracket simply by the pair $(A,B)$.
    Finally, if $(A,B)$ is a biquandle bracket on a biquandle $X$ taking values in $R$, then we say $(A,B)$ is an \emph{$X$-bracket}.
\end{definition}

The oriented link invariant corresponding to $(A,B)$ is simply the multiset of all biquandle bracket values, one for each valid $X$-coloring of the diagram.
This is an enhancement of the biquandle counting invariant because the counting invariant is simply the cardinality of this multiset.

If $X$ is finite and we fix an ordering $X = \left\{ x_1, \dotsc, x_n \right\}$, we can encapsulate all of the information about a biquandle bracket in a presentation matrix.
This is an $n$ by $2n$ matrix $M$ over $R$ with entries $M_{i,j}=A_{x_i,x_j}$ and $M_{i,n+j}=B_{x_i,x_j}$ for $i,j\in\{1,2,...,n\}$.

\begin{example}
    Let $X$ be the biquandle given by the following operation table.
    \begin{align*}
        \untri: \quad
        \begin{bmatrix}
            2 & 2 \\
            1 & 1 \\
        \end{bmatrix}
        \qquad \qquad
        \ovtri: \quad
        \begin{bmatrix}
            2 & 2 \\
            1 & 1 \\
        \end{bmatrix}
    \end{align*}
    This biquandle's operations simply flip the left operand, regardless of the right operand.
    Thus, in any $X$-colored link diagram, if one follows a particular strand, the color will alternate at every crossing.
    Let $R = \mathbb{Z}_2[t] / \left( 1 + t + t^3 \right)$.
    Then the following presentation matrix defines an $X$-bracket.
    \begin{align*}
        \left[
            \begin{array}{cc|cc}
                1 & 1+t & t & t+t^2 \\
                1+t^2 & 1 & 1 & t \\
            \end{array}
        \right]
    \end{align*}
    This biquandle bracket was found in \cite{nelson2017quantum}.
\end{example}

\begin{example}
    Let $X$ be any biquandle, and let $R$ be any commutative unital ring.
    If $A_{x,y} = a$ and $B_{x,y} = b$ for all $x,y \in X$ and some $a,b \in R^\times$, then the $X$-bracket $(A,B)$ is called a ``constant'' biquandle bracket (the reader should verify that this does indeed define an $X$-bracket).
    In general, the value of a biquandle bracket on links is unchanged when all values of $A_{x,y}$ and $B_{x,y}$ are scaled by a common factor of $R^\times$ (see \cite{nelson2017quantum}).
    So, dividing through by $b$, the above bracket gives the same invariant as the bracket $A_{x,y} = \frac{a}{b}$, $B_{x,y} = 1$ for all $x,y \in X$. By considering the maps $A, B$ as instead taking values in $R\left[\left(\frac{a}{b}\right)^{\pm 1/2}\right]$, we can make the substitution $q^2 = \frac{a}{b}$ and divide everything through by $q$ to yield the equivalent bracket (when treated over $R\left[\left(\frac{a}{b}\right)^{\pm 1/2}\right]$), $A_{x,y} = q$, $B_{x,y} = q^{-1}$ for all $x, y \in X$. 
    Now, for any particular $X$-coloring of a link, the value of this bracket is evidently the Jones polynomial of the link, which is a Laurent polynomial in the variable $q^2$. Hence the invariant itself still takes values in $R$ rather than $R\left[\left(\frac{a}{b}\right)^{\pm 1/2}\right]$.
    Therefore, the value of a constant biquandle bracket (with $A_{x,y} = a$ and $B_{x,y} = b$) is a multiset containing the Jones polynomial evaluated at $\frac{a}{b}$, and it contains this value with multiplicity equal to the number of valid $X$-colorings of the link.
\end{example}

Next, we define a biquandle 2-cocycle following the notation of \cite{nelson2017quantum}.
 
\begin{definition}
    Let $X$ be a biquandle, and let $G$ be an abelian group (written multiplicatively here).
    A function $\phi:X\times X\rightarrow G$ is a \emph{biquandle 2-cocycle} if, for all $x, y, z \in X$, we have
    \begin{enumerate}
        \item[(i)] $\phi(x,x) = 1$,
        \item[(ii)] $\phi(x,y) \cdot \phi(y,z) \cdot \phi\left(x\untri y, z\ovtri y\right) = \phi(x,z) \cdot \phi\left(y\ovtri x,z\ovtri x\right) \cdot \phi\left(x\untri z, y\untri z\right)$.
    \end{enumerate}
\end{definition}

A $2$-cocycle $\phi$ can be used to define the \emph{biquandle 2-cocycle invariant}, as seen in \cite{ceniceros2014augmented}.
Namely, for each valid $X$-coloring of a link, compute the value $\prod_{\tau} \phi\left(x_\tau,y_\tau\right)^{\epsilon(\tau)}$, where $\tau$ ranges across all crossings in the colored link, $\epsilon(\tau)$ is the sign (either $+1$ or $-1$) of $\tau$, and $x_\tau,y_\tau$ are the biquandle colors of the arcs on the left side of the crossing when it is oriented so that strands point downwards, following a similar convention to the biquandle bracket above.
The value of the biquandle $2$-cocycle invariant associated to $\phi$ is then the multiset of all such values, one for each valid $X$-coloring of the link.

Again if $X$ is finite, we construct a presentation matrix for a cocycle in the same fashion as with the biquandle brackets; fixing the ordering $X = \left\{ x_1, \dotsc, x_n \right\}$, the presentation matrix $P$ for a cocycle is an $n \times n$ matrix over $A$ with entries $P_{i,j} = \phi \left( x_i, x_j \right)$.

\begin{example}
    Let $X$ be the biquandle described in Example 1 above.
    Let $A$ be the free abelian group on two symbols, $a$ and $b$.
    Then the following presentation matrix defines a biquandle 2-cocycle $\phi : X \times X \to A$.
    \begin{align*}
        \begin{bmatrix}
            1 & a \\
            b & 1
        \end{bmatrix}
    \end{align*}
    The invariant corresponding to $\phi$ is trivial on all knots (single-component links).
    In fact, more is true: for any $X$-colored knot diagram, at any crossing $\tau$, we have $x_\tau = y_\tau$ (so that $\phi\left(x_\tau, y_\tau\right) = 1$).
    To see this, consider any crossing $\tau$ in the diagram, oriented downward.
    Follow the strand starting at the bottom-left arc of the $\tau$.
    When this strand first returns to $\tau$, it must connect to the top-left arc of $\tau$.
    If it connected to the top-right arc first, then it would close the loop and the diagram would have more than one component (and so not be a knot).
    If it connected to the bottom-right arc first, then the orientation of the strand would be inconsistent.
    
    This strand now makes a closed loop to the left of $\tau$.
    Any time this closed loop intersects itself in a crossing, the strand must pass through this crossing twice.
    The rest of the knot diagram (excluding this closed loop to the left of $\tau$) makes a closed loop to the right of $\tau$.
    And each time this other closed loop crosses the first closed loop, it must cross back at some point, since it must end up on the same side (inside or outside) of the closed loop that it started in.
    Thus the number of crossings that the strand starting from the bottom-left arc of $\tau$ encounters before it gets to the top-left arc of $\tau$ is even.
    Therefore in any $X$-coloring, since the color changes at each crossing and there are two colors, $x_\tau$ must be the same as $y_\tau$.
    
    For an $k$-component link diagram, there are exactly $2^k$ $X$-colorings: simply pick a color for some arc of some component (a binary choice) and walk along that component, switching the color at each crossing.
    The component will be involved in an even number of crossings (counting self-crossings twice), so this procedure will terminate consistently.
    The above argument shows that the value of the invariant for knots corresponding to $\phi$ is exactly the multiset $\left\{ 1, 1 \right\}$.
    It is not hard to see, by modifying the above argument, that for a two-component link, the invariant corresponding to $\phi$ is the multiset $\left\{ 1, 1, \left( a b \right)^\ell, \left( a b \right)^\ell \right\}$, where $\ell$ is the linking number of the two components of the link.
    For links with more components, the invariant's behavior is more complicated.
\end{example}

\section{Results}

\begin{theorem}
    Let $X$ be a biquandle, and let $(A,B)$ be an $X$-bracket over a ring $R$.
    Suppose that there exist functions $A', B', \phi: X \times X \rightarrow R^\times$ such that for every $x,y\in X$ we have $A_{x,y}=A'_{x,y} \cdot \phi(x,y)$ and $B_{x,y}=B'_{x,y} \cdot \phi(x,y)$.
    Then $(A',B')$ form a biquandle bracket if and only if $\phi$ is a biquandle $2$-cocycle, up to a constant multiple.
\end{theorem}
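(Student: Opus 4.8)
The plan is to exploit a precise structural match between the subscripts appearing in bracket axiom (iii) and those in the cocycle axiom (ii). Writing $A_{x,y} = A'_{x,y}\,\phi(x,y)$ and $B_{x,y} = B'_{x,y}\,\phi(x,y)$, I first isolate the two ``cocycle factors''
\[
    \Phi_L := \phi(x,y)\,\phi(y,z)\,\phi\bigl(x\untri y,\, z\ovtri y\bigr), \qquad
    \Phi_R := \phi(x,z)\,\phi\bigl(y\ovtri x,\, z\ovtri x\bigr)\,\phi\bigl(x\untri z,\, y\untri z\bigr),
\]
so that cocycle condition (ii) is precisely the statement $\Phi_L = \Phi_R$. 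The key observation is that in each of the five equations of axiom (iii), every monomial is a product of three of the values $A,B$ whose subscripts are exactly the three pairs feeding $\Phi_L$ or the three pairs feeding $\Phi_R$. Hence, after substitution, each side of each equation factors as $\Phi_L$ (or $\Phi_R$) times the identical expression written in $A'$ and $B'$.

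Before engaging axiom (iii) I would dispatch the other two axioms. For axiom (ii), the factors of $\phi(x,y)$ cancel in each of $A_{x,y}B_{x,y}^{-1} = A'_{x,y}(B'_{x,y})^{-1}$ and $A_{x,y}^{-1}B_{x,y} = (A'_{x,y})^{-1}B'_{x,y}$, so the quantity $-A_{x,y}B_{x,y}^{-1} - A_{x,y}^{-1}B_{x,y}$ is unchanged; thus $\delta$ is common to both pairs and axiom (ii) holds for $(A',B')$ automatically, regardless of $\phi$. For axiom (i), I would compare writhe elements: substituting into $\delta A_{x,x} + B_{x,x} = w$ gives $\phi(x,x)\bigl(\delta A'_{x,x} + B'_{x,x}\bigr) = w$, and likewise for the inverse equation. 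This is where the phrase ``up to a constant'' enters: $(A',B')$ can satisfy axiom (i) with some writhe element $w'$ exactly when $\phi(x,x) = w/w'$ does not depend on $x$, i.e. when $\phi$ has constant diagonal rather than the cocycle-normalized value $\phi(x,x)=1$.

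With these reductions, axiom (iii) carries the whole equivalence. By the factorization above, each of the five equations for $(A,B)$ reads $\Phi_L\cdot L' = \Phi_R\cdot R'$, where $L',R'$ denote the two sides written in $A',B'$ (with the roles of $\Phi_L$ and $\Phi_R$ swapped in the fifth equation). On whichever side is a single monomial, the primed expression is a product of units and hence invertible. So if $\Phi_L = \Phi_R$ I may cancel to obtain $L' = R'$, the matching equation for $(A',B')$; conversely, if $L' = R'$ holds I cancel the invertible monomial side to recover $\Phi_L = \Phi_R$. Since $(A,B)$ is assumed to be a bracket, running this across all five equations shows that $(A',B')$ satisfies axiom (iii) if and only if $\phi$ satisfies cocycle condition (ii). Combining this with the diagonal analysis of axiom (i) and the automatic validity of axiom (ii) gives the chain: $(A',B')$ is a bracket $\iff$ $\phi$ satisfies condition (ii) and has constant diagonal $\iff$ $\phi$ is a $2$-cocycle up to a constant multiple.

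The step I expect to be the main obstacle is the bookkeeping for the two inhomogeneous equations, those carrying a four-term sum on one side. There I must verify that all four summands share the identical triple of subscripts, so that a single factor of $\Phi_R$ (respectively $\Phi_L$) can be pulled out of the entire sum and cancelled against the lone monomial on the other side; the $\delta$-weighted summand causes no trouble since $\delta$ has already been shown invariant. Once this common-factor extraction is checked termwise, every cancellation is routine and the equivalence follows.
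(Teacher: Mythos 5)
Your proposal is correct and follows essentially the same route as the paper: the $\phi$-factors extracted from each equation of bracket axiom (iii) are exactly the two sides of cocycle condition (ii) (cancellable since all values are units, with the single-monomial side handling the two four-term equations), axiom (ii) holds automatically because $\phi$ cancels, and axiom (i) reduces to $\phi$ being constant on the diagonal. The only cosmetic difference is in axiom (i), where you substitute directly and compare writhe elements via $w' = w/\phi(x,x)$, whereas the paper first derives $w = -A_{x,x}^2 B_{x,x}^{-1}$ from axioms (i) and (ii) and compares these across diagonal entries --- the same constant-diagonal conclusion, and your termwise check of the inhomogeneous equations is if anything more explicit than the paper's ``it's easy to see'' remark.
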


\begin{proof}
    We can write the first equation of biquandle bracket condition (iii) as
    \begin{multline}
        A'_{x,y}A'_{y,z}A'_{x\ovtri y,z \untri y}\phi(x,y)\phi(y,z)\phi(x\ovtri y,z \untri y) \\
        =A'_{x,z}A'_{y\ovtri x,z \ovtri x}A'_{x\untri z,y \untri z}\phi(x,z)\phi(y\ovtri x,z \ovtri x)\phi(x\untri z,y \untri z).
    \end{multline}
    Hence we have 
    $$ A'_{x,y}A'_{y,z}A'_{x\ovtri y,z \untri y}=A'_{x,z}A'_{y\ovtri x,z \ovtri x}A'_{x\untri z,y \untri z} $$
   if and only if 
    \begin{equation}
        \phi(x,y)\phi(y,z)\phi(x\ovtri y,z \untri y)=\phi(x,z)\phi(y\ovtri x,z \ovtri x)\phi(x\untri z,y \untri z),
    \end{equation}
    which is 2-cocycle condition (ii). It's easy to see that the 2-cocycle condition (ii) similarly factors out of all the other equations in biquandle bracket condition (iii). \\
    For any $x,y \in X$ we have  $$ -A_{x,y} B_{x,y}^{-1} - A_{x.y}^{-1} B_{x,y} = \delta = -A'_{x,y} \left( B'_{x,y} \right)^{-1} - \left( A'_{x,y} \right)^{-1} B'_{x,y}$$
    Thus biquandle bracket condition (ii) is satisfied regardless of $\phi$.
    
    \noindent As a special case of biquandle bracket condition (ii), we see that
    $$ \delta = -A_{x,x} B_{x,x}^{-1} - A_{x.x}^{-1} B_{x,x} $$
    Plugging this in to biquandle bracket condition (i) we get
    $$ w=-A_{x,x}^2B_{x,x}^{-1}. $$
    This is true for every $x\in X$ so for all $x,y \in X$,
    $$ -A_{x,x}^2B_{x,x}^{-1}=-A_{y,y}^2B_{y,y}^{-1}. $$
    It follows then that 
    $$ - \left( A'_{x,x} \right)^2 \left( B'_{x,x} \right)^{-1} = - \left( A'_{y,y} \right)^2 \left( B'_{y,y} \right)^{-1}. $$
    if and only if
    $\phi(x,x)=\phi(y,y)$.
\end{proof}

Note that the constant factor in $\phi$ can essentially be cancelled out at no cost, since it can be absorbed into the biquandle bracket $(A',B')$ and since biquandle brackets differing by constants define the same invariant.

\begin{remark}
    Suppose an $X$-bracket $(A,B)$ factors in this way to a pointwise product of $\left( A', B' \right)$ and $\phi$.
    Then the value of $(A,B)$ on an $X$-colored link will be the value of $\left( A', B' \right)$ multiplied by the value of the biquandle $2$-cocycle invariant associated with $\phi$.
    Thus, if we retain the information about which coloring was associated with each value of the bracket and $2$-cocycle invariant, then the invariant defined by $(A,B)$ cannot be more powerful than the invariant defined by $\left( A', B' \right)$ and the $2$-cocycle invariant defined by $\phi$, computed in tandem. 
    
    Since cohomologous $2$-cocycles define the same $2$-cocycle invariant \cite{ceniceros2014augmented}, this remark also gives an alternate proof of proposition 2 in \cite{nelson2017quantum}, showing that biquandle brackets differing by coboundaries define the same invariant.
\end{remark}

\begin{example}
    In \cite{yang2017enhanced}, Yang introduced a new biquandle bracket which is a generalization of the bracket in example 1, introduced in \cite{nelson2017quantum}.
    The underlying biquandle is the two-element set $X = \mathbb{Z}_2$ with the operations being $x \ovtri y = x \untri y = 1 - x$ (the action `flips' the left argument, so an $X$-coloring of a link can be viewed as a $2$-coloring where the color of a strand changes at every crossing).
    This biquandle is the same one presented in Examples 1 and 3.
    Yang's bracket takes values in any commutative ring $R$ with unity.
    Let $a,b,n,e,w \in R^\times$ be arbitrary invertible elements.
    Then the bracket is given by the following matrix, using the notation introduced after Definition 2 above with $x_1 = 0, x_2 = 1$.
    \begin{align*}
        \left[
            \begin{array}{cc|cc}
                na & ea & nb & eb \\
                wa & na & wb & nb \\
            \end{array}
        \right]
    \end{align*}
    This matrix is the following Hadamard (entry-wise) product of the following two matrices.
    \begin{align*}
        \left[
            \begin{array}{cc|cc}
                na & na & nb & nb \\
                na & na & nb & nb \\
            \end{array}
        \right]
        \odot
        \left[
            \begin{array}{cc|cc}
                1 & n^{-1}e & 1 & n^{-1}e \\
                n^{-1}w & 1 & n^{-1}w & 1  \\
            \end{array}
        \right]
    \end{align*}
    Now the left multiplicand is the presentation for the constant $X$-bracket $A',B'$ having $A_{x,y}' = na$ and $B_{x,y}' = nb$ for all $x,y \in X$.
    Thus, by Theorem 1, the function defined by $\phi(0,0) = \phi(1,1) = 1$, $\phi(0,1) = n^{-1} e$, and $\phi(1,0) = n^{-1} w$ is a biquandle $2$-cocycle (up to a constant multiple).
    In this case, $\phi(x,x) = 1$ for all $x \in X$, so $\phi$ is itself a $2$-cocycle.

    Thus, as in Example 2, the value of the bracket $\left( A', B' \right)$ on a knot is simply a multiset containing two copies of the Jones polynomial evaluated at $\frac{a}{b}$.
    So, in general, the value of $\left( A', B' \right)$ on a link is the Jones polynomial with multiplicity $2^k$ (recall from Example 3 that there are exactly $2^k$ $X$-colorings of a $k$-component link).
    Additionally, $\phi$ is recognized to be the same $2$-cocycle as the one presented in Example 3.
    So, since the invariant corresponding to $\phi$ is trivial on knots, Yang's invariant is equivalent to the Jones polynomial on knots.
\end{example}

This example suggests the following proposition:
\begin{prop}
Let $(A,B)$ be an $X$-bracket, where the ratio $A_{x,y}B_{x,y}^{-1}$ is constant. Then $(A,B)$ is the product of a constant bracket and a 2-cocycle. 
\end{prop}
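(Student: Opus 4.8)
The plan is to exhibit an explicit factorization of $(A,B)$ into a constant bracket and a function $\phi$, and then invoke Theorem 1 to conclude that $\phi$ is a $2$-cocycle. First I would let $c \in R^\times$ denote the constant value of $A_{x,y} B_{x,y}^{-1}$, so that $A_{x,y} = c\, B_{x,y}$ for all $x,y \in X$. I would then fix invertible elements $a, b \in R^\times$ with $a/b = c$ and take $(A',B')$ to be the constant bracket defined by $A'_{x,y} = a$ and $B'_{x,y} = b$; by Example 2 this is a genuine $X$-bracket. Finally I would define $\phi(x,y) = a^{-1} A_{x,y}$.

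The key verification is that this single $\phi$ witnesses the factorization for both $A$ and $B$ simultaneously. By construction $A_{x,y} = a\,\phi(x,y) = A'_{x,y}\,\phi(x,y)$. For the $B$ factor I would compute $b\,\phi(x,y) = b\, a^{-1} A_{x,y} = c^{-1} A_{x,y} = B_{x,y}$, where the middle equality uses $b\,a^{-1} = c^{-1}$ and the last equality is exactly the constant-ratio hypothesis. Thus $B_{x,y} = B'_{x,y}\,\phi(x,y)$ as well, so $(A,B)$ is the pointwise product of the constant bracket $(A',B')$ with $\phi$. This is the only place the hypothesis enters, and it is precisely what guarantees that one function $\phi$ can serve both roles—without the constant ratio, the ratio $A_{x,y}/A'_{x,y}$ and $B_{x,y}/B'_{x,y}$ would disagree.

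With the factorization in hand, I would apply Theorem 1: since both $(A,B)$ and the constant bracket $(A',B')$ are $X$-brackets, the theorem forces $\phi$ to be a biquandle $2$-cocycle up to a constant multiple. To upgrade this to a genuine $2$-cocycle, I would absorb the constant: writing $\phi = k\,\psi$ with $\psi$ a genuine $2$-cocycle, the scalars $ka$ and $kb$ still have ratio $c$, so $(ka, kb)$ is again a constant bracket and $(A,B)$ is the product of this constant bracket with $\psi$. The step I expect to require the most care is eliminating the ``up to a constant'' clause cleanly; the cheapest route is to first show $A_{x,x}$ is independent of $x$. Multiplying the second equation of bracket condition (i) through by $B_{x,x}$ and using $A_{x,x}^{-1} B_{x,x} = c^{-1}$ yields $B_{x,x} = w\bigl(\delta c^{-1} + 1\bigr)$, a constant, whence $A_{x,x} = c\,B_{x,x}$ is constant as well. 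Choosing $a$ to be this common value of $A_{x,x}$ then makes $\phi(x,x) = 1$ directly, so the factoring of bracket condition (iii) carried out in the proof of Theorem 1 already exhibits $\phi$ as a genuine $2$-cocycle with no leftover constant to absorb.
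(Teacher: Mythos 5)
Your proposal is correct and takes essentially the same approach as the paper: factor $(A,B)$ as a pointwise product of a constant bracket with $\phi(x,y) = a^{-1}A_{x,y}$ (using the constant ratio to check one $\phi$ serves both $A$ and $B$) and then invoke Theorem 1. The paper's terser proof simply chooses $a = A_{x_0,x_0}$ and $b = B_{x_0,x_0}$ at the outset, which normalizes $\phi$ on the diagonal automatically; your closing argument via bracket condition (i) that $B_{x,x} = w\left(\delta c^{-1}+1\right)$ is constant just makes explicit the handling of the ``up to a constant multiple'' clause that the paper leaves implicit.
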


\begin{proof}
    Choose $x_0\in X$ and let $a = A_{x_0,x_0}, b = B_{x_0,x_0}$. Then for any $x,y\in X$, we have  $A_{x,y}= a \phi(x,y)$ and $B_{x,y}= b \phi(x,y)$ for some $\phi:X\times X \rightarrow R^\times$. By Theorem 1, $\phi$ is a 2-cocycle.
\end{proof}

\begin{remark}
    Hence the value of any such $X$-bracket on an $X$-colored link factors into the product of the Jones Polynomial evaluated at $\frac{a}{b}$ and the 2-cocycle invariant defined by $\phi$.
    Thus to create $X$-brackets that distinguish links differently from the Jones polynomial and 2-cocycles, we would want the function $(x,y) \mapsto B_{x,y}^{-1} A_{x,y}$ take on more than one value.
    We next present a result and some examples concerning the number of values that can be taken by this function.
\end{remark}

\begin{theorem}
    Let $X$ be a biquandle, and let $(A,B)$ be an $X$-bracket.
    Fix some $x_0, y_0 \in X$ and let $a = A_{x_0,y_0}$ and $b = B_{x_0,y_0}$.
    If $R$ is an integral domain, then there exists some function $\psi: X \times X \to R^\times$ such that, for each $x,y\in X$, one of the following conditions holds.
    \begin{itemize}
        \item[\emph{(i)}]
            $A_{x,y} = a \cdot \psi(x,y)$ and $B_{x,y} = b \cdot \psi(x,y)$.
        
        \item[\emph{(ii)}]
            $A_{x,y} = b \cdot \psi(x,y)$ and $B_{x,y} = a \cdot \psi(x,y)$.
    \end{itemize}
\end{theorem}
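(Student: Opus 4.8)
The plan is to exploit biquandle bracket condition (ii), which is the only condition that constrains the pointwise ratio $A_{x,y}B_{x,y}^{-1}$, and to show that over an integral domain this ratio is forced to take one of only two values, $ab^{-1}$ or $ba^{-1}$. Once that dichotomy is established, the function $\psi$ can simply be read off pointwise.

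First I would record the key consequence of condition (ii): the expression $-A_{x,y}B_{x,y}^{-1} - A_{x,y}^{-1}B_{x,y}$ equals $\delta$ for \emph{every} pair $(x,y)$, and in particular equals its value at the chosen pair $(x_0,y_0)$. Writing $r = A_{x,y}B_{x,y}^{-1}$ and $r_0 = ab^{-1}$ (both units of $R$), this says $r + r^{-1} = r_0 + r_0^{-1}$. Multiplying through by $rr_0$ and collecting terms, I would reduce this to the single factored equation $(r - r_0)(rr_0 - 1) = 0$.

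This is where the integral domain hypothesis does all the work, and I expect it to be the only genuinely essential step. Because $R$ has no zero divisors, the vanishing of the product forces $r = r_0$ or $rr_0 = 1$, i.e. $r = r_0^{-1}$; over a ring with zero divisors one could have neither factor vanish, so the two-valued conclusion would break down. Translating back, at each pair $(x,y)$ either $A_{x,y}B_{x,y}^{-1} = ab^{-1}$ or $A_{x,y}B_{x,y}^{-1} = ba^{-1}$.

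Finally I would build $\psi$ case by case. In the first case the equation $A_{x,y}B_{x,y}^{-1} = ab^{-1}$ rearranges to $A_{x,y}a^{-1} = B_{x,y}b^{-1}$, and I set $\psi(x,y)$ equal to this common unit of $R^\times$, which yields condition (i). In the second case $A_{x,y}B_{x,y}^{-1} = ba^{-1}$ rearranges to $A_{x,y}b^{-1} = B_{x,y}a^{-1}$, and setting $\psi(x,y)$ to that common unit yields condition (ii). When $a = \pm b$ both alternatives may hold simultaneously, but then either assignment is valid, so $\psi$ is defined on all of $X \times X$. Since the statement asserts only the existence of some such $\psi$, no coherence between the choices made at different pairs is required, and the argument concludes.
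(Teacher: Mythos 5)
Your proposal is correct and follows essentially the same route as the paper: both set $v = A_{x,y}B_{x,y}^{-1}$ and $u = ab^{-1}$, derive $u + u^{-1} = v + v^{-1}$ from bracket condition (ii), factor this as $(uv-1)(v-u)=0$, invoke the integral domain hypothesis to conclude $v = u$ or $v = u^{-1}$, and define $\psi$ pointwise in each case exactly as you do. Your added observations --- that the integral domain hypothesis enters only at the factorization step, and that the overlap case $a = \pm b$ is harmless since no coherence between pointwise choices is needed --- are correct refinements of the paper's (terser) argument.
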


\begin{proof}
    For any $x,y \in X$, biquandle bracket condition (ii) gives 
    $$ -ab^{-1}-ba^{-1}=-A_{x,y}B_{x,y}^{-1}-A_{x,y}^{-1}B_{x,y} $$
    If we let $u=ab^{-1}$, $v=A_{x,y}B_{x,y}^{-1}$, then this is
    \begin{align*}
        u+u^{-1} &= v+v^{-1} \\
        u^2v+v &= v^2u+u \\
        uv(v-u)-(v-u) &= 0 \\
        (uv-1)(v-u) &= 0
    \end{align*}
    Therefore we either have $ab^{-1}=A_{x,y}B_{x,y}^{-1}$, in which case we have
    \[
        A_{x,y}a^{-1}=B_{x,y}b^{-1}=:\psi(x,y),
    \]
    or we have $ab^{-1}=B_{x,y}A_{x,y}^{-1}$, so that 
    \[
        A_{x,y}b^{-1}=B_{x,y}a^{-1}=:\psi(x,y). \pushQED{} \tag*{\qedsymbol}
    \]
\end{proof}

\begin{remark}
    This result shows that, under the hypothesis conditions, $B_{x,y}^{-1} A_{x,y}$ can only possibly take two values: $b^{-1} a$ or $a^{-1} b$.
    The result cannot be strengthened by removing the possibility of condition (ii) and thus concluding that $B_{x,y}^{-1} A_{x,y}$ is constant in $x$ and $y$.
    
    To see this, consider the following biquandle bracket found in \cite{nelson2017trace}.
    The biquandle $X$ for the bracket has three elements, and the operation tables for $\untri$ and $\ovtri$ are as follows:
    \begin{align*}
        \untri: \quad
        \begin{bmatrix}
            3 & 1 & 3 \\
            2 & 2 & 2 \\
            1 & 3 & 1
        \end{bmatrix}
        \qquad \qquad
        \ovtri: \quad
        \begin{bmatrix}
            3 & 3 & 3 \\
            2 & 2 & 2 \\
            1 & 1 & 1
        \end{bmatrix}
    \end{align*}
    The $X$-bracket $(A,B)$ takes values in $\mathbb{Z}_5$, having the following presentation matrix.
    \begin{align*}
         \left[
            \begin{array}{ccc|ccc}
                1 & 3 & 1 & 2 & 4 & 2 \\
                1 & 4 & 1 & 2 & 2 & 2 \\
                1 & 3 & 1 & 2 & 4 & 2
            \end{array}
        \right]
    \end{align*}
    Notice that $B_{1,1}^{-1} A_{1,1} = 1^{-1} 2 = 2$, whereas $B_{1,2}^{-1} A_{1,2} = 3^{-1} 4 = 2 \cdot 4 = 3 \neq 2$.
    This example shows the existence of biquandle brackets taking values in an integral domain for which $B_{x,y}^{-1} A_{x,y}$ is not constant in $x$ and $y$.
\end{remark}

\begin{corollary}
    Let $X$ be a biquandle, and let $(A,B)$ be an $X$-bracket taking values in an integral domain $R$. Suppose there exists $x_0,y_0 \in X$ such that $A_{x_0,y_0} = B_{x_0,y_0}$. Then $A_{x,y} = B_{x,y}$ for all $x,y \in X$, and so each of the functions $A,B$ are a 2-cocycle, up to a constant multiple.
\end{corollary}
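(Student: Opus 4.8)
The plan is to derive the statement almost entirely from Theorem 2, since that theorem already does the delicate work of exploiting the integral-domain hypothesis. First I would apply Theorem 2 to the given pair $x_0,y_0$, setting $a = A_{x_0,y_0}$ and $b = B_{x_0,y_0}$. The hypothesis $A_{x_0,y_0} = B_{x_0,y_0}$ says exactly that $a = b$, and Theorem 2 then supplies a function $\psi\colon X\times X \to R^\times$ such that every pair $(x,y)$ satisfies either case (i) or case (ii) of its conclusion.

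The key observation is that when $a = b$ these two cases coincide. In case (i) we get $A_{x,y} = a\,\psi(x,y)$ and $B_{x,y} = b\,\psi(x,y) = a\,\psi(x,y)$, and in case (ii) we get $A_{x,y} = b\,\psi(x,y) = a\,\psi(x,y)$ and $B_{x,y} = a\,\psi(x,y)$. Either way $A_{x,y} = a\,\psi(x,y) = B_{x,y}$, so $A_{x,y} = B_{x,y}$ for every $x,y \in X$, which is the first assertion.

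For the second assertion I would feed the factorization $A_{x,y} = a\,\psi(x,y)$, $B_{x,y} = a\,\psi(x,y)$ into Theorem 1. The constant pair $(A',B') = (a,a)$ is a biquandle bracket by Example 2, so Theorem 1 forces $\psi$ to be a biquandle 2-cocycle (up to a constant multiple). Hence each of $A$ and $B$ is the constant $a$ times a 2-cocycle, i.e.\ a 2-cocycle up to a constant multiple, as claimed. One could equally observe that $A = B$ makes the ratio $A_{x,y}B_{x,y}^{-1}$ identically $1$, hence constant, and invoke Proposition 1 directly.

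There is no genuine computational obstacle here; the real content has been packaged into Theorem 2, whose proof factors $(uv-1)(v-u)=0$ and uses that $R$ is an integral domain to rule out any value of $A_{x,y}B_{x,y}^{-1}$ other than $ab^{-1}$ and $a^{-1}b$. The one point deserving care is recognizing that this domain hypothesis is precisely what collapses the dichotomy of Theorem 2: the single local equality $A_{x_0,y_0}=B_{x_0,y_0}$ propagates to all of $X$ only because $R$ has no zero divisors, and over a general ring the function $A_{x,y}B_{x,y}^{-1}$ could take further values and the conclusion would fail.
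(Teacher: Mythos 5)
Your proposal is correct and matches the paper's own proof: the paper likewise observes that with $a=b$ both cases of Theorem 2 give $A_{x,y} = A_{x_0,y_0}\,\psi(x,y) = B_{x,y}$, and then obtains the 2-cocycle conclusion as ``an easy application of theorem 1'' --- precisely your constant-bracket factorization (equivalently, your Proposition 1 route). No gaps; your closing remark about the integral-domain hypothesis correctly identifies why the argument works.
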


\begin{proof}
    Let $x,y \in X$. In either of the cases in theorem 2, we have $A_{x,y} = A_{x_0,y_0} \psi(x,y) = B_{x,y}$, and whenever $A_{x,y} = B_{x,y}$ for all $x,y \in X$, each of $A,B$ form a 2-cocycle, up to a constant multiple (this was shown in \cite{nelson2017quantum} and also follows from an easy application of theorem 1).
\end{proof}

\begin{remark}
    The above result is not true in general for commutative rings which are not integral domains.
    
    For example, consider the following biquandle bracket found using a computer program.
    The biquandle $X$ for the bracket is the so-called ``trivial biquandle'' on two elements (call them $1$ and $2$).
    This means that $x \untri y = x \ovtri y = x$ for all $x, y \in X = \{ 1, 2 \}$.
    The $X$-bracket $(A, B)$ takes values in $\mathbb{Z}_4$, having the following presentation matrix.
    \begin{align*}
         \left[
            \begin{array}{cc|cc}
                1 & 1 & 1 & 3 \\
                1 & 1 & 3 & 1 \\
            \end{array}
        \right]
    \end{align*}
    Notice that $A_{1,1} = B_{1,1}$ but $A_{1,2} \neq B_{1,2}$.
\end{remark}

\begin{remark}
    By Theorem 2, the function $(x,y) \mapsto B_{x,y}^{-1} A_{x,y}$ can only take two possible values whenever $R$ is an integral domain.
    In the remark above, $R$ is not an integral domain but this function still only takes two values.
    However, in general, this function may take more than two values.
    
    In the remark above, the function $(x,y) \mapsto B_{x,y}^{-1} A_{x,y}$ can only take two possible values.
    By Theorem 2, this is also true whenever $R$ is an integral domain.
    However, this is not necessarily true if $R$ is not an integral domain.
    
    In the remark above and in all cases where $R$ is an integral domain, the function $(x,y) \mapsto B_{x,y}^{-1} A_{x,y}$ can only take two possible values.
    This is also not true in a general ring which is not an integral domain.
    
    For example, consider the following biquandle bracket (again found using a computer program) over the trivial biquandle on three elements, taking values in the ring $\mathbb{Z}_9$.
    \begin{align*}
         \left[
            \begin{array}{ccc|ccc}
                1 & 1 & 1 & 1 & 1 & 1 \\
                1 & 1 & 1 & 4 & 1 & 4 \\
                1 & 1 & 1 & 4 & 7 & 1
            \end{array}
        \right]
    \end{align*}
    Notice that $B_{3,1}^{-1} A_{3,1} = 4^{-1} 1 = 7$, $B_{3,2}^{-1} A_{3,2} = 7^{-1} 1 = 4$, and $B_{3,3}^{-1} A_{3,3} = 1^{-1} 1 = 1$, so there are three possible values of $B_{x,y}^{-1} A_{x,y}$.
\end{remark}

\begin{remark}
    Theorem $1$ says that if we stay in the first case of the result of Theorem $2$, i.e. that if $A_{x,y} = a \cdot \psi(x,y)$ and $B_{x,y} = b \cdot \psi(x,y)$ for all $x,y \in X$, then $\psi$ is a biquandle $2$-cocycle.
    This is because the functions $A'_{x,y} = a$ and $B'_{x,y} = b$ constitute the constant bracket $\left( A', B' \right)$.
    One might hope that the function $\psi$ is always a constant multiple of a biquandle 2-cocycle, even if both cases in the result occur.
    However, this is not true in general.
    For example, consider the same biquandle bracket as above in Remark 3.
    
    Take $x_0 = y_0 = 1$ so that $a = A_{1,1} = 1$ and $b = B_{1,1} = 2$.
    Now we have $A_{2,2} = 4$ and $B_{2,2} = 2$, so we are clearly not in case (i), and thus $A_{2,2} = 2 \cdot \psi(2,2)$ and $B_{2,2} = 1 \cdot \psi(2,2)$, so $\psi(2,2) = 2$.
    By construction, the function $\psi$ in the theorem satisfies $\psi\left( 1, 1 \right) = 1$.
    Thus $\psi$ is not constant on the diagonal subset of $X \times X$, which shows that it cannot be a constant multiple of a 2-cocycle.
\end{remark}

\begin{example}
    In all of the above examples of biquandles, all values of $B_{x,y}^{-1} A_{x,y}$ are of finite order in $R^\times$.
    This is not true in general.
    For example, consider the following biquandle bracket over the trivial biquandle on $2$ elements, taking values in $\mathbb{Z}\left[t, t^{-1} \right]$.
    \begin{align*}
        \left[
        \begin{array}{cc|cc}
            1 & 1 & t & t \\
            1 & 1 & t^{-1} & t
        \end{array}
        \right]
    \end{align*}
    Notice that $B_{x,y}^{-1} A_{x,y} = t$ or $t^{-1}$, both of which have infinite multiplicative order.
\end{example}

\section{More about Biquandle Brackets and 2-Cocycles}

We next present a result about the behavior of biquandle brackets on the diagonal subset of $X \times X$, i.e. the set $\left\{ (x,x) : x \in X \right\}$.

\begin{theorem}
    Let $(A,B)$ be an $X$-bracket. Then for any $x,y \in X$ we have
    \begin{itemize}
        \item[\emph{(i)}]
            $A_{x,x} = A_{x \untri y, x \untri y} = A_{x \ovtri y, x \ovtri y}$.
        
        \item[\emph{(ii)}]
            $B_{x,x} = B_{x \untri y, x \untri y} = B_{x \ovtri y, x \ovtri y}$.
    \end{itemize}
\end{theorem}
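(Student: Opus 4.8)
The plan is to extract the two diagonal identities for $A$ directly from the first equation of biquandle bracket condition (iii) by means of two well-chosen substitutions, and then to deduce the corresponding identities for $B$ from those for $A$ using the relationship between the diagonal values of $A$ and $B$ that conditions (i) and (ii) force.

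First I would prove part (i). Starting from the first equation of condition (iii),
\[
    A_{x,y} A_{y,z} A_{x \untri y, z \ovtri y} = A_{x,z} A_{y \ovtri x, z \ovtri x} A_{x \untri z, y \untri z},
\]
I would substitute $z = y$. Appealing to biquandle axiom (i), i.e. $y \ovtri y = y \untri y$, the factor $A_{x \untri y, z \ovtri y}$ on the left coincides with the factor $A_{x \untri z, y \untri z}$ on the right, while the factor $A_{x,y}$ appears on both sides. Cancelling these two factors (legitimate since every entry lies in $R^\times$) leaves $A_{y,y} = A_{y \ovtri x, y \ovtri x}$, which after relabeling is precisely $A_{x,x} = A_{x \ovtri y, x \ovtri y}$. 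To get the remaining half I would instead substitute $y = x$ into the same equation; now axiom (i) in the form $x \untri x = x \ovtri x$ makes two of the three factors on each side coincide, and cancelling them leaves $A_{x,x} = A_{x \untri z, x \untri z}$, i.e. $A_{x,x} = A_{x \untri y, x \untri y}$.

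For part (ii) I would avoid re-running the argument with the $B$-equations and instead invoke the identity established in the proof of Theorem 1: combining conditions (i) and (ii) yields $w = -A_{x,x}^2 B_{x,x}^{-1}$ for every $x \in X$, whence $B_{x,x} = -A_{x,x}^2 w^{-1}$. Thus the diagonal value of $B$ at any element is a fixed function of the diagonal value of $A$ at that same element, depending only on the global constant $w$. Applying this formula at $x$, at $x \untri y$, and at $x \ovtri y$, and using part (i) to equate the three relevant values of $A$, immediately gives $B_{x,x} = B_{x \untri y, x \untri y} = B_{x \ovtri y, x \ovtri y}$.

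The main obstacle I anticipate is simply finding the right substitutions in part (i): the entire content lies in recognizing that setting $z = y$ (respectively $y = x$), together with the biquandle identity $x \untri x = x \ovtri x$, collapses the three-factor exchange relation into a single two-factor cancellation. Once these substitutions are identified the computations are routine, and the passage from (i) to (ii) via $B_{x,x} = -A_{x,x}^2 w^{-1}$ needs no further manipulation of the exchange laws.
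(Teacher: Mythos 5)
Your proposal is correct and follows essentially the same route as the paper: the same two substitutions ($z=y$ and $y=x$) into the first equation of condition (iii), using $x \untri x = x \ovtri x$ to cancel matching factors, and the same deduction of part (ii) from $w = -A_{x,x}^2 B_{x,x}^{-1}$ (equivalently, the constancy of $A_{x,x}^2 B_{x,x}^{-1}$), which the paper likewise imports from the proof of Theorem 1.
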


\begin{proof}
    Taking $z=y$ in biquandle bracket condition (iii) gives 
    \begin{equation*}
        A_{x,y} A_{y,y} A_{x \untri y, y \ovtri y} = A_{x,y} A_{y \ovtri x, y \ovtri x} A_{x \untri y, y \untri y}
    \end{equation*}
    And since $y \untri y = y \ovtri y$, this reduces to $A_{y,y} = A_{y \ovtri x, y \ovtri x}$. Taking $y=x$ instead gives 
    \begin{equation*}
         A_{x,x} A_{x,z} A_{x \untri x, z \ovtri x} = A_{x,z} A_{x \ovtri x, z \ovtri x} A_{x \untri z, x \untri z}
    \end{equation*}
    which similarly reduces to $A_{x,x} = A_{x \untri z, x \untri z}$. Since for any $x,y \in X$, biquandle bracket conditions (i) and (ii) yield $A_{x,x}^2B_{x,x}^{-1}=A_{y,y}^2B_{y,y}^{-1}$, we then also have $B_{x,x} = B_{x \untri y, x \untri y} = B_{x \ovtri y, x \ovtri y}$.
\end{proof}

Given an $X$-bracket $(A,B)$, if $A_{x,x}=A_{y,y}$ for every $x,y \in X$, then the function $A$ is a $2$-cocycle (up to a constant multiple). The previous theorem then suggests a class of biquandles that always give brackets with this property.

\begin{definition}
    A biquandle $X$ is semi-transitive if there exists $x \in X$ such that $x \untri X \bigcup x \ovtri X = X$ (where $x \untri X = \{ x \untri y$ : $y \in X \}$, and likewise for $x \ovtri X$).
\end{definition}

\begin{corollary}
    Let $X$ be a semi-transitive biquandle, and let $(A,B)$ be an $X$-bracket.
    Then $A$ is a 2-cocycle (up to a constant multiple).
\end{corollary}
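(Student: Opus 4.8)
The plan is to reduce the corollary to the single statement that $A$ is constant on the diagonal, that is, $A_{x,x} = A_{y,y}$ for all $x,y \in X$. Indeed, the observation recorded just before the definition of semi-transitivity already tells us that a constant diagonal forces $A$ to be a $2$-cocycle up to a constant multiple (equivalently, the first equation of bracket condition (iii) is literally the multiplicative $2$-cocycle relation for $A$, so dividing $A$ by its constant diagonal value produces a function satisfying both $2$-cocycle conditions). Thus essentially all of the work is to deduce constancy of the diagonal from semi-transitivity.

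First I would invoke the hypothesis to fix an element $x_0 \in X$ with $x_0 \untri X \cup x_0 \ovtri X = X$. The purpose of this choice is that every element $w \in X$ may be written in at least one of the two forms $w = x_0 \untri y$ or $w = x_0 \ovtri y$ for some $y \in X$; this is exactly the content of semi-transitivity.

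Next I would apply Theorem~3(i) with $x = x_0$. For every $y \in X$ it yields
\[
    A_{x_0, x_0} = A_{x_0 \untri y,\, x_0 \untri y} = A_{x_0 \ovtri y,\, x_0 \ovtri y}.
\]
Combining this with the preceding paragraph, for an arbitrary $w \in X$ I write $w$ in one of its two admissible forms; in either case Theorem~3(i) gives $A_{w,w} = A_{x_0, x_0}$. Hence $A_{w,w}$ equals the single value $A_{x_0,x_0}$ for all $w \in X$, so $A$ is constant on the diagonal. Concluding via the observation cited above then finishes the proof.

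I do not anticipate a serious obstacle, as the argument is a short chase through Theorem~3. The only points requiring care are confirming that the two-case expression of each $w \in X$ is precisely what semi-transitivity guarantees, and that Theorem~3(i) simultaneously controls both the $\untri$ and $\ovtri$ forms so that both cases are covered uniformly, which it does.
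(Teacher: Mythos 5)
Your proposal is correct and follows essentially the same route as the paper: both use semi-transitivity to express every element as $x_0 \untri z$ or $x_0 \ovtri z$, invoke Theorem~3(i) to conclude $A$ is constant on the diagonal, and then appeal to the observation (stated just before the definition of semi-transitivity) that a constant diagonal makes $A$ a 2-cocycle up to a constant multiple. Your added justification that dividing $A$ by its diagonal value yields both 2-cocycle conditions, since the first equation of bracket condition (iii) is exactly the multiplicative 2-cocycle relation, is a correct filling-in of a step the paper leaves implicit.
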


\begin{proof}
    Let $x \in X$ be the element described in the definition of semi-transitivity.
    Then for any $y \in X$, either $y = x \ovtri z$ for some $z \in X$ or $y = x \untri z$ for some $z \in X$.
    In either case, $A_{y,y} = A_{x,x}$ since
    $$ A_{x \ovtri z, x \ovtri z} = A_{x,x} = A_{x \untri z, x \untri z}. $$
\end{proof}
    It follows that if $X$ is semi-transitive, then any $X$-bracket is the product of a bracket $(\textbf{1},B)$, a 2-cocycle, and a constant multiple, where $\textbf{1}:X\times X \rightarrow R^\times$ is defined by $\textbf{1}(x,y) = 1$ for all $x,y \in X$. Since the link invariant itself is unchanged by constant multiples, when constructing $X$-brackets it's enough to chose a 2-cocycle and a function $B:X\times X \rightarrow R^\times$ satisfying the simpler set of biquandle bracket axioms given by setting $A_{x,y} = 1$ for all $x,y\in X$
\begin{remark}
    The definition and result above would be of no substance if there were no semi-transitive biquandles.
    However there are many.
    In particular, all odd-degree dihedral quandles are semi-transitive.
    The dihedral quandle of degree $n$ is a quandle structure on the set $\mathbb{Z}_n$ where $x \untri y = 2 y - x$ (and, since it is a quandle, $x \ovtri y = x$) for all $x, y \in \mathbb{Z}_n$.
    If $n$ is odd, then for any $x, z \in \mathbb{Z}_n$, there is a unique $y$ such that $2 y - x = z$.
    Namely, $y = 2^{-1} (x + z)$ (here we see why $n$ must be odd---$2$ must be invertible).
    Thus, in fact, any $x \in \mathbb{Z}_n$ can be the element described in the definition of semi-transitivity.
\end{remark}

\section{Further Questions}

    We conclude with some avenues for further inquiry. In Theorem 2, what more can be said about the function $\psi$?
    For instance, can it be made into a knot invariant? 
    If we split $\psi$ into two separate functions based on each case, i.e. either $A_{x,y}=a\phi(x,y)$ or $A_{x,y} = b\psi(x,y)$, then is $\phi$ a cocycle?
    For what biquandle brackets will $\psi$ be a cocycle?
    In addition, what other properties of biquandle brackets can be found when restricted to specific classes of biquandles or rings.

\nocite{elhamdadi2015quandles}
\bibliographystyle{plain}
\bibliography{main}

\begin{thebibliography}{1}

\bibitem{ceniceros2014augmented}
Jose Ceniceros, Mohamed Elhamdadi, Matthew Green, and Sam Nelson.
\newblock Augmented biracks and their homology.
\newblock {\em International Journal of Mathematics}, 25(09):1450087, 2014.

\bibitem{elhamdadi2015quandles}
Mohamed Elhamdadi and Sam Nelson.
\newblock {\em Quandles}, volume~74.
\newblock American Mathematical Soc., 2015.

\bibitem{nelson2017quantum}
Sam Nelson, Michael~E Orrison, and Veronica Rivera.
\newblock Quantum enhancements and biquandle brackets.
\newblock {\em Journal of Knot Theory and Its Ramifications}, 26(05):1750034,
  2017.

\bibitem{nelson2017trace}
Sam Nelson and Natsumi Oyamaguchi.
\newblock Trace diagrams and biquandle brackets.
\newblock {\em International Journal of Mathematics}, 28(14):1750104, 2017.

\bibitem{yang2017enhanced}
Zhiqing Yang.
\newblock Enhanced kauffman bracket.
\newblock {\em arXiv preprint arXiv:1702.03391}, 2017.

\end{thebibliography}

\end{document}